\theoremstyle{plain}
\newtheorem{thm}{Theorem}[section]
\newtheorem{lem}[thm]{Lemma}
\newtheorem{cor}[thm]{Corollary}
\theoremstyle{definition}
\newtheorem{ex}{Example}
\def\COMMENT#1{$^{<\the\commentno>}$%
     \vadjust{\vbox to 0pt{\vss\vskip-8pt\rightline{%
     \rlap{\hbox{\hskip7mm \vbox{\pretolerance=-1
     \doublehyphendemerits=0 \finalhyphendemerits=0
     \hsize40mm\tolerance=10000\eightpoint
     \lineskip=0pt\lineskiplimit=0pt
     \rightskip=0pt plus16mm\baselineskip8pt\noindent
     \hskip0pt       
     {$\langle$\the\commentno. #1$\rangle$}\endgraf}}}}\vss}}%
     \global\advance\commentno by1}%
\def\writecommentsasfootnotes{%
 \def\COMMENT{\global\advance\commentno by1\footnote{$^{<\the\commentno>}$}}%
 }
\def\nocomments{\def\COMMENT##1{}}
\def\?#1{\vadjust{\vbox to 0pt{\vss\vskip-8pt\leftline{%
     \llap{\hbox{\vbox{\pretolerance=-1
     \doublehyphendemerits=0\finalhyphendemerits=0
     \hsize16truemm\tolerance=10000\small
     \lineskip=0pt\lineskiplimit=0pt
     \rightskip=0pt plus16truemm\baselineskip8pt\noindent
     \hskip0pt        
     #1\endgraf}\hskip7truemm}}}\vss}}}
\newenvironment{txteq}
  {
    \begin{equation}
    \begin{minipage}[c]{0.85\textwidth} 
    \em                                
  }
  {\end{minipage}\end{equation}\ignorespacesafterend}
\newenvironment{txteq*}
  {
    \begin{equation*}
    \begin{minipage}[c]{0.85\textwidth} 
    \em                                
  }
  {\end{minipage}\end{equation*}\ignorespacesafterend}
\def\specrel#1#2{\mathrel{\mathop{\kern0pt #1}\limits_{#2}}}
\def\Specrel#1#2{\mathrel{\mathop{\kern0pt #1}\limits^{#2}}}
\newcommand{\N}{\ensuremath{\mathbb{N}}}
\newcommand{\sm}{\ensuremath{\smallsetminus}}
\newcommand{\es}{\ensuremath{\emptyset}}
\newcommand{\sub}{\subseteq}
\newcommand{\cE}{\ensuremath{\mathcal E}}
\newcommand{\cN}{{\ensuremath N}}
\newcommand{\cO}{\ensuremath{\mathcal O}}
\newcommand{\cP}{\ensuremath{\mathcal P}}
\newcommand{\cR}{{\ensuremath R}}
\newcommand{\cS}{{\ensuremath S}}
\newcommand{\cT}{{\ensuremath T}}
\newcommand{\cV}{\ensuremath{\mathcal V}}
\newcommand{\cW}{\ensuremath{\mathcal W}}
\newcommand{\sfont}[1]{\ensuremath{\mathsf{#1}}}
\newcommand{\srmax}{\sfont{all_r}}
\newcommand{\srloc}{\sfont{loc_r}}
\newcommand{\srext}{\sfont{ext_r}}
\newcommand{\smax}{\sfont{all}}
\newcommand{\sloc}{\sfont{loc}}
\newcommand{\sext}{\sfont{ext}}
\newcommand{\sLoc}{\sfont{Loc}}
\newcommand{\sExt}{\sfont{Ext}}
\newcommand{\SP}{\ensuremath{(\cS,\cP)}}
\newcommand{\RP}{\ensuremath{(\cR,\cP)}}
\newcommand{\SPp}{\ensuremath{(\cS',\cP')}}
\newcommand{\TV}{\ensuremath{(\cT,\cV)}}
\def\td{tree-decom\-po\-si\-tion}
\newcommand{\sys}{separation system}
\newcommand{\sepn}[2]{\ensuremath{{(#1,#2)}}}
\newcommand{\AB}{\sepn AB}
\newcommand{\BA}{\sepn BA}
\newcommand{\CD}{\sepn CD}
\newcommand{\DC}{\sepn DC}
\let\doublebar=\| \def\size#1{{\doublebar#1\doublebar}}
\newcommand{\mcm}[3]{\newcommand{#1}[#2]{{\ensuremath{#3}}}} 
\mcm{\tuple}{1}{\langle #1 \rangle}
\mcm{\name}{1}{\ulcorner #1 \urcorner}
\mcm{\Nbb}{0}{\mathbb{N}}
\mcm{\Zbb}{0}{\mathbb{Z}}
\mcm{\Rbb}{0}{\mathbb{R}}
\mcm{\Cbb}{0}{\mathbb{C}}
\mcm{\Fbb}{0}{\mathbb{F}}
\mcm{\Bcal}{0}{\cal B}
\mcm{\Ccal}{0}{\cal C}
\mcm{\Dcal}{0}{\cal D}
\mcm{\Ecal}{0}{\cal E}
\mcm{\Fcal}{0}{\cal F}
\mcm{\Gcal}{0}{\cal G}
\mcm{\Hcal}{0}{\cal H}
\mcm{\Ical}{0}{\cal I}
\mcm{\Lcal}{0}{\cal L}
\mcm{\Mcal}{0}{\cal M}
\mcm{\Ncal}{0}{\cal N\!}
\mcm{\Ocal}{0}{\cal O}
\mcm{\Pcal}{0}{{\cal P}}
\mcm{\Scal}{0}{{\cal S}}
\mcm{\Tcal}{0}{{\cal T}}
\mcm{\Ucal}{0}{{\cal U}}
\mcm{\Vcal}{0}{{\cal V}}
\mcm{\Wcal}{0}{{\cal W}}
\mcm{\Ycal}{0}{{\cal Y}}
\mcm{\Mfrak}{0}{\mathfrak M}
\renewcommand{\cE}{{\ensuremath\rm e}}
\newcommand{\partOne}{\cite{CDHH13CanonicalAlg}}
\title{Canonical \td s of finite graphs\\ II. Essential parts}
\author{J.\ Carmesin \and R.\ Diestel \and M.\ Hamann \and F.\ Hundertmark}
\begin{document}

\maketitle

\begin{abstract}\noindent
In Part~I of this series we described three algorithms that construct canonical \td s of graphs which distinguish all their $k$-blocks and tangles of order~$k$. We now establish lower bounds on the number of parts in these decompositions that contain such a block or tangle, and determine conditions under which such parts contain nothing but a $k$-block.
\end{abstract}

\section*{Introduction}
A~\emph{$k$-block} in a graph $G$, where $k$ is any positive integer, is a maximal set $X$ of at least~$k$ vertices such that no two vertices $x,x'\in X$ can be separated in~$G$ by fewer than~$k$ vertices other than $x$ and~$x'$. Thus, $k$-blocks for large~$k$ can be thought of as highly connected pieces of a graph, but their connectivity is measured not in the subgraph they induce but in the ambient graph.

Another concept of highly connected pieces of a graph, formally quite different from $k$-blocks, is the notion of a tangle proposed by Robertson and Seymour~\cite{GMX}. Tangles are not defined directly in terms of vertices and edges, but indirectly by assigning to every low-order separation of the graph one of its two sides, the side in which `the tangle' is assumed to sit. In order for this to make sense, the assignments of sides have to satisfy some consistency constraints, in line with our intuition that one tangle should not be able to sit in disjoint parts of the graph at once.

In a fundamental paper on graph connectivity and tree structure, Hundertmark~\cite{profiles} showed that high-order blocks and tangles have a common generalization, which he called `profiles'. These also work for discrete structures other than graphs. We continue to work with profiles in this paper. All the reader needs to know about profiles is explained in Part~I of this paper~\partOne.

In Part~I we described a family of algorithms which construct, for any finite graph~$G$ and $k\in\N$, a \td\ of $G$ that has two properties: it distinguishes all the $k$-blocks and tangles of order~$k$ in~$G$, so that distinct blocks or tangles come to sit in distinct parts of the decomposition, and it is canonical in that the map assigning this decomposition to $G$ commutes with graph isomorphisms.

~~\llap{In} this follow-up to~\partOne, we study these decompositions in more detail. Given~$k$, let us call a part of such a decomposition \emph{essential\/} if it contains a $k$-block or accommodates a tangle of order~$k$. (Precise definitions will follow.) Since the aim of our \td s is to display how $G$ can be cut up into its highly connected pieces, ideally every part of such a decomposition would be essential, and the essential parts containing a $k$-block would contain nothing else. (This makes no sense for tangles, since they cannot be captured by a set of vertices.)

Neither of these aims can always be attained. Our objective is to see when or to which extent they can. After providing in Section~\ref{OrientingTDs} some background on how \td s relate to oriented \sys s, we devote Section~\ref{sec_kcon} to establishing upper bounds on the number of inessential parts in a canonical \td\ of a graph that distinguishes all its $k$-profiles. These bounds depend in interesting ways on the algorithm chosen to find the decomposition. All the bounds we establish are sharp.

In Section~\ref{junk} we investigate to what extent the decomposition parts containing a $k$-block can be required to contain nothing else. It turns out that there can be $k$-blocks that never occur as entire parts in a \td\ of adhesion~$<k$, due to a local obstruction in terms of the way in which these blocks are separated from the rest of~$G$. However, one can show that this is the only obstruction. We find a condition by which such blocks can be identified, which leads to the following best possible result: for every $k$, every finite graph has a canonical \td\ that distinguishes all its $k$-profiles efficiently and in which all those $k$-blocks are a part that occur as a part in some \td\ of adhesion~$<k$~\cite{CG14:isolatingblocks}. Finally, we establish some sufficient global conditions on $G$ to ensure that \emph{every\/} decomposition part containing a $k$-block contains nothing else. (So these conditions imply that there are no local obstructions to this as found earlier, for any $k$-block.)

In order to read this paper with ease, the reader should be familiar with~\partOne; in particular with the terminology introduced in Section~2 there, the notions of a task and a strategy as defined in Section~3, and the notion of a $k$-strategy as defined in Section~4. The proofs in~\partOne\ need not be understood in detail, but Examples 1 and~4 make useful background.

Readers interested in $k$-blocks as such may refer to~\cite{ForcingBlocks}, where we relate the greatest number $k$ such that $G$ has a $k$-block to other graph invariants.

Throughout this paper, we consider a fixed finite graph~$G = (V,E)$.

\section{Orientations of decomposition trees}\label{OrientingTDs}

By \cite[Theorem~2.2]{CDHH13CanonicalAlg}, every nested proper \sys~\cN\ of our graph $G = (V,E)$ gives rise to a \td~\TV\ that {\em induces\/} it, in that the  edges of the decomposition tree~\cT\ correspond to the separations in~\cN. How exactly \TV\ can be obtained from~\cN\ is described in~\cite{confing}. In this paper we shall be concerned with how profiles~-- in particular, blocks and tangles~-- correspond to nodes of~\cT. This correspondence will be injective~-- distinct blocks or tangles will `live in' distinct nodes of~\cT~-- but it will not normally be onto: only some of the nodes of~\cT\ will accommodate a block or tangle (all of some fixed order~$k$).

Our aim in this section is to show that every profile $P$ living in a node of $T$ defines a consistent orientation of~$E(T)$ (towards that node), or equivalently of~$N$ (namely,~$N\cap P$),%
   \footnote{Recall that profiles are consistent orientations of separation systems such as $N$ satisfying a further axiom~(P). A set of oriented tree-edges or of separations of~$G$ is \emph{consistent} if no two of them point away from each other; see~\cite{CDHH13CanonicalAlg} for the formal definition.}
   and that the set of \emph{all} consistent orientations of~$N$ corresponds bijectively to the nodes of~\cT. Let $\cV = (V_t)_{t\in\cT}$.

As \TV\ induces~\cN, there is for every separation $\AB\in\cN$ an oriented edge $e = t_A t_B  $ of~\cT\ such that, if $T_A$ denotes the component of $\cT - e$ that contains~$t_A $ and $T_B$ denote the component containing~$t_B  $, we have
 $$\AB = \Big( \bigcup_{t\in T_A}\! V_t\>,\bigcup_{t\in T_B}\! V_t\Big).$$
If \TV\ was obtained from~\cN\ as in \cite[Theorem~2.2]{CDHH13CanonicalAlg}, then $e$ is unique, and we say that it \emph{represents~\AB\ in~\cT\!}.%
   \footnote{In general if $e$ is not unique, we can make it unique by contracting all but one of the edges of $\cT$ inducing a given partition in~\cN, merging the parts corresponding to the nodes of contracted edges.}

Every node $t\in \cT$ induces an orientation of the edges of \cT\!, towards it. This corresponds as above to an orientation $O(t)$ of~\cN,
$$ O(t) := \{\AB\in \cN \mid t \in T_B \},$$
from which we can reobtain the part~$V_t$ of~\TV\ as
\begin{equation}
V_t\ =\!\!\! \bigcap_{\AB \in O(t)} \!\!\!B.\label{parts}
\end{equation}
We say that $t$ \emph{induces\/} the orientation $O(t)$ of~\cN, and that the separations in~$O(t)$ are \emph{oriented towards~$t$}.

Distinct nodes $t,t'\in\cT$ induce different orientations of~\cN, since these orientations disagree on every separation that corresponds to an edge on the path~$t\cT t'$. Also clearly, not all orientations of~\cN\ are induced by a node of~\cT. But it is interesting in our context to see which are:

\begin{thm}\label{living}
\begin{enumerate}[\rm (i)]\itemsep=0pt
\item The orientations of~\cN\ that are induced by nodes of~\cT\ are precisely the consistent orientations of~\cN.
\item An orientation of the set of all proper $(<k)$-separations of~$G$ orients the separations induced by any \td\ of adhesion~$<k$ towards a node of its decomposition tree if and only if it is consistent.
\end{enumerate}
\end{thm}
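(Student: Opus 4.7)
The plan is to prove (i) first and derive (ii) from it. Orientations of $\cN$ correspond bijectively to edge-orientations of the decomposition tree~$\cT$: each separation in $\cN$ corresponds to a unique edge of~$\cT$, and choosing a side picks a direction on that edge. The key technical step is to check that under this bijection two oriented separations point away from each other (in the sense of \partOne) if and only if the corresponding directed edges $e, f$ of~$\cT$ do, where this means exactly that their ``head components'' $R_e, R_f$ (the components of $\cT - e$ and $\cT - f$ containing the respective heads) are disjoint. This reduces to a short case analysis on whether $e$ and $f$ share an endpoint, using the description of $\AB$ as the partition of $V$ into $\bigcup_{t\in T_A} V_t$ and $\bigcup_{t\in T_B} V_t$.

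Having made this translation, (i) is short. The easy direction: if $O = O(t)$, then $t$ lies in every head component, so no two head components are disjoint and $O$ is consistent. For the harder direction I would invoke the Helly property for subtrees of a finite tree: any pairwise intersecting family of subtrees has common intersection. Given a consistent $O$, the head components $\{R_e : e \in E(\cT)\}$ are subtrees intersecting pairwise, so there is a common node $t \in \bigcap_e R_e$; every edge then points towards~$t$, and tracing back through the bijection gives $O = O(t)$.

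For (ii), I would reduce to (i) by restriction. An orientation of the set of all proper $(<k)$-separations restricts to an orientation of the nested subsystem $\cN$ induced by any given \td\ of adhesion~$<k$, and by (i) this restriction is induced by a node iff it is consistent; this gives the $\Leftarrow$ direction of (ii). For $\Rightarrow$ I would argue contrapositively: if the full orientation is not consistent, there exist $\AB, \CD$ in it that point away from each other, which forces $B \sub C$ and $D \sub A$, so $\{\AB, \BA, \CD, \DC\}$ is already a nested proper separation system of adhesion~$<k$. By \cite[Theorem~2.2]{CDHH13CanonicalAlg} it induces some \td\ of adhesion~$<k$, and by (i) the restricted orientation $\{\AB, \CD\}$ is not induced by any node of its decomposition tree. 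The main obstacle in all this is the case analysis establishing the translation between ``pointing away'' for separations and for tree edges; after that, Helly's theorem gives (i) and (ii) is a short consequence.
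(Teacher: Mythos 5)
Your overall architecture is the paper's: part (ii) is derived from (i) exactly as there (restriction gives the ``if'' direction; for ``only if'' the two witnessing separations \AB, \CD\ with $\BA\le\CD$ form a four-element nested proper system, which by Theorem~2.2 of Part~I is induced by a \td\ of adhesion~$<k$, and (i) applies), and replacing the paper's ``follow the oriented edges to a sink'' by the Helly property of subtrees is a harmless variation. Note, though, that your Helly direction only needs the easy half of your dictionary: if the head components satisfy $T_B\cap T_D=\es$, then $T_B\sub T_C$ and $T_D\sub T_A$, hence $B\sub C$ and $D\sub A$, so the separations point away from each other; that is a one-line consequence of the partition description.

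The gap is in the other half of your ``key technical step'', which is exactly what your so-called easy direction of (i) silently consumes: you need that if $\AB,\CD\in O(t)$ with $\DC\le\AB$ then $T_B\cap T_D=\es$ (impossible, since $t\in T_B\cap T_D$). This is not a short case analysis on whether $e$ and $f$ share an endpoint, and it cannot be extracted from the description of \AB\ as the partition $\bigl(\bigcup_{t\in T_A}V_t,\ \bigcup_{t\in T_B}V_t\bigr)$ alone: inclusions among the vertex sets $A,B,C,D$ do not translate back into inclusions among the subtrees, because parts of a \td\ may be contained in one another. Indeed the implication is false without properness: take \cT\ a path $t_1t_2t_3$ with $V_{t_1}=\{1,2\}$, $V_{t_2}=\{2,3\}$, $V_{t_3}=\{2\}$ (a hub node); then $O(t_3)$ contains $\AB=(\{1,2\},\{2,3\})$ and $\CD=(\{1,2,3\},\{2\})$, which satisfy $\BA\le\CD$, so here a node-induced orientation is inconsistent and the two separations point away from each other although their head components meet in~$t_3$. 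So any correct proof of this half must use that the separations in~\cN\ are proper, and must also dispose of a residual case in which the analysis yields $\AB=\DC$ rather than a contradiction, which is excluded only because an orientation never contains a separation together with its inverse. That is precisely the paper's argument (locate $f$ in $T_A$ or $T_B$ and $e$ in $T_C$ or $T_D$, use properness to rule out $B\sub D\sub A$, deduce $T_A\sub T_D$ and $T_C\sub T_B$, and conclude $\AB=\DC$). As sketched, your reduction omits both ingredients, so the step would fail; once you add them you are reproducing the paper's case analysis, with Helly substituted for the sink argument.
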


\proof
(i) Let $O$ be an orientation of~$\cN$ that is not induced by a node in $\cT\!$, and consider the corresponding orientation of (the edges of)~\cT. Then there are edges $e,e'$ of~\cT\ that point in opposite directions. Indeed, follow the orientated edges of~\cT\ to a sink~$t$; this exists since \cT\ is finite. As $t$ does not induce~$O$, some oriented edge $e'=t't''$ has $t$ lie in the component of $\cT-e'$ that contains~$t'$. Then $\cT-e'$ contains a $t'$--$t$ path. Its last edge $e$ is oriented towards~$t$, by the choice of~$t$. The separations $\AB,\CD\in O$ represented by $e$ and~$e'$ then satisfy $\BA \le \CD$, so $O$ is inconsistent. 

For the converse implication suppose an orientation $O(t)$ induced by some $t \in \cT$ is inconsistent. Then there are $\AB,\CD \in O(t)$ with $\DC \le \AB$.%
   \COMMENT{}
   Let $e$ be the oriented edge of~\cT\ representing~\AB, and let $f$ be the oriented edge representing~\CD.

Consider the subtrees $T_A,T_B,T_C,T_D$ of~\cT. Note that $T_B\cap T_D$ contains~$t$, by definition of~$O = O(t)$, and hence contains the component $T_t$ of $\cT-e-f$ containing~$t$.

If $f\in T_A$, then $T_B$ is a connected subgraph of~$\cT-f$ containing~$t$, and hence contained in~$T_D$. With $T_B\sub T_D$%
   \COMMENT{}
   we also have $B\sub D$. But now $\DC\le\AB$ implies $B\sub D\sub A$, and so \AB\ is not a proper separation. But it is, because $\AB\in\cN$. Hence $f\in T_B$, and similarly $e\in T_D$.

Let us show that $t_B, t_D\in T_t$. Suppose $f$ lies on the path in~\cT\ from $t$ to~$t_B$. Then this path traverses $f$ from $t_D$ to~$t_C$, since its initial segment from $t$ to~$f$ lies in~$T_D$ (the component of $\cT-f$ containing~$t$) and hence ends in~$t_D$. But then $e\in T_C$, contrary to what we have shown. Thus $f\notin t\cT t_B$, and clearly also $e\notin t\cT t_B$.%
   \COMMENT{}
   Therefore $t_B\in T_t$, and similarly $t_D\in T_t$.

Since $f\notin T_A$, we know that $T_A$ is a connected subgraph of~$\cT-f$ containing an end of~$e$. Adding $e$ to it we obtain a connected subgraph of $\cT-f$ that contains both ends of~$e$ and therefore meets~$T_t$, and adding $T_t$ too we obtain a connected subgraph of~$\cT-f$ that contains both $T_A$ and~$t_D$.%
   \COMMENT{}
   Therefore $T_A\sub T_D$, and thus $A\sub D$. Analogously, $C\sub B$. But now $\DC\le\AB$ implies both $A\sub D\sub A$ and $C\sub B\sub C$, giving $\AB=\DC$. But then $O$ contains both \CD\ and~\DC, which contradicts its definition as an orientation of~\cN.

(ii) If a given orientation of the set~$\cS_k$ of all proper $(<k)$-separations of~$G$ is consistent, then so is the orientation it induces on~\cN. By~(i), this orientation of~\cN\ orients it towards a node of the decomposition tree.

Conversely, if an orientation of~$\cS_k$ is inconsistent, then this is witnessed by separations $\AB,\CD\in\cS_k$ with $\CD\le\AB$ such that \AB\ is oriented towards~$B$ but \CD\ is oriented towards~$C$.%
   \COMMENT{}
   By \cite[Theorem~2.2]{CDHH13CanonicalAlg}, $\cN = \{\AB,\BA,\CD,\DC\}$ is induced by a \td~\TV. Since the orientation $\{\DC,\AB\}$ which our given orientation of $\cS_k$ induces on~\cN\ is inconsistent, we know from~(i) that it does not orient~\cN\ towards any node of~$\cT$.
\endproof

Theorem~\ref{living}\,(i) implies in particular that any profile $P$ which orients~\cN\ defines a unique node $t\in \cT$: the $t$ that induces its \cN-profile $P \cap \cN = O(t)$. We say that $P$ \emph{inhabits} this node~$t$ and the corresponding part~$V_t$. If $P$ is a $k$-block profile, induced by the $k$-block~$X$, say, then this is the case if and only if $X\sub V_t$.

Given a set \cP\ of profiles, we shall call a node~$t$ of~\cT\ and the corresponding part~$V_t$ \emph{essential (wrt.~\cP)} if there is a profile in \cP\ which inhabits $t$.

Nodes $t$ such that $V_t\sub A\cap B$ for some $\AB\in\cN$ are called \emph{hub nodes\/}; the node $t$ itself is then a \emph{hub}.
Example~2 in~\partOne\ shows that distinct hub nodes ${t, t' \in \cT}$ may have the same hub $V_t = V_t'$. So the bijection established by Theorem~\ref{living} does not induce a similar correspondence between the consistent orientations of~\cN\ and the parts of~\TV\ as a set, only as a family $\cV=(V_t)_{t\in\cT}$. This is illustrated by~\cite[Figure~7]{confing}.

Theorem~\ref{living}\,(ii) will not be needed in the rest of this paper. But it is interesting in its own right, in that it provides a converse to the following well-known fact in graph minor theory. Every haven~\cite{ST1993GraphSearching}, preference~\cite{ReedConnectivityMeasure}%
   \COMMENT{}
   or bramble~\cite{DiestelBook10noEE} of order~$\ge k$ in~$G$ orients the set~$\cS_k$ of all $(<k)$-separations of~$G$ (e.g., `towards' that bramble). In particular, it orients the separations induced by any \td\ of adhesion~$<k$, and it orients these towards a node of that decomposition tree. But this fact has no converse: while it is always possible to orient $\cS_k$ in such a way that the separations induced by any \td\ of adhesion~$<k$ are oriented towards a node~$t$ of the decomposition tree~-- take%
   \footnote{Every \sys\ has a consistent orientation; see~\partOne.}%
      \COMMENT{}
   any consistent orientation of~$\cS_k$ and apply Theorem~\ref{living}\,(ii)~-- this orientation of~$\cS_k$ need not be a haven or preference of order~$k$: there may be no bramble of order~$\ge k$ `living in'~$t$.%
   \footnote{For example, identify three copies of~$K^5$ in one vertex~$v$, and orient every ~$(<2)$-separation towards the side that contains two of these~$K^5$. This is a consistent orientation of~$\cS_2$ that is not a 2-haven or 2-preference and is not induced by a bramble of order~$\ge2$, but which still orients the 1-separations of any \td\ of adhesion~1 towards a node~$t$ (whose corresponding part could be either a $K^5$ or a $K^1$ hub).}

Theorem~\ref{living}\,(ii) shows that the consistent orientations of~$\cS_k$, which are generalizations of havens or preferences of order~$k$, are the unique weakest-possible such generalization that still orients all \td s of adhesion~$<k$ towards a node.

\section{Bounding the number of inessential parts}\label{sec_kcon}

Let $k\in\N$, and let \cP\ be a set of $k$-profiles of our graph~$G$, both fixed throughout this section. Whenever we use the term `essential' in this section, this will be with reference to this set~\cP.

Any canonical \td\ distinguishing~\cP\ has at least~$|\cP|$ essential parts, one for every profile in~\cP. Our aim in this section is to bound its number of inessential parts in terms of~$|\cP|$.

Variants of \cite[Example~1]{CDHH13CanonicalAlg} show that no such bounds exist if we ever use a strategy that has \smax, \srmax, \sext\ or~\sloc\ among its values, so we confine ourselves to strategies with values in~$\{\srext,\srloc\}$.%
   \COMMENT{}

The definition of the parts of a \td\ \TV\ being somewhat complicated (see Section~\ref{OrientingTDs}), rather than bounding the number $|\cV| - |\cP|$ of inessential parts of~\TV\ directly, we shall bound the number $|\cN|$ instead. Since ${1\over2}|\cN|$%
   \COMMENT{}
   is the number of edges of~\cT~-- as \cN\ contains `oriented' separations, every edge of \cT\ appears twice~-- and ${1\over2}|\cN|+1$ its number of nodes, the number of inessential parts will then be ${1\over2}|\cN|+1 - |\cP|$.

Our aim, then, will be to choose a strategy that minimizes~$|\cN|$. Our strategies should therefore take values in~$\{\srext,\srloc\}$ only, i.e., we should reduce our tasks before we tackle them, by deleting separations that do not distinguish any profiles in~\cP. Moreover, for a single reduced task~\SP\ we have $\sext\SP \sub \sloc\SP$ by~\cite[(11)]{CDHH13CanonicalAlg}, i.e., every separation chosen by~\sext\ is also chosen by~\sloc. This suggests that the overall strategy \sExt, which only uses~\srext, should also return fewer separations than~\sLoc, which only uses~\srloc~-- perhaps substantially fewer, since if we select fewer separations at each step we also have more interim steps in which we reduce.

Surprisingly, this is not the case. Although our general bounds for \sExt\ are indeed better than those for~\sLoc\ (or the same, which is already a surprise), Example~\ref{ex_locbetter} below will show \sLoc\ yields better results than~\sExt\ for some graphs.

Let $|\cP| =: p$. For single tasks~\SP, we obtain the following bounds on~$|\cN|$:

\begin{lem}\label{lem_probbounds}
For every feasible task \SP\ we have
\begin{align}
2(p-1) \le |\cN_{\sExt}\SP | &\le  2p \label{bd_Ext}, \mbox{ and} \\
2(p-1) \le |\cN_{\sLoc}\SP | &\le  4(p-1) \label{bd_Loc}.
\end{align}
\end{lem}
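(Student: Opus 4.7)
Both lower bounds are an immediate consequence of Theorem~\ref{living}\,(i): since $\TV$ distinguishes~$\cP$, the $p$ profiles inhabit $p$ pairwise distinct nodes of~$\cT$, so $\cT$ has at least $p$ nodes and hence at least $p-1$ edges, each contributing two oriented separations to~$\cN$. Thus $|\cN|\ge 2(p-1)$ for either strategy.

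The upper bounds I prove by induction on~$p$, treating both strategies in parallel. In the base case $p\le 1$ the reduced task has no separations at all (a separation only belongs to a reduced task if it distinguishes two profiles in~$\cP$, which requires $p\ge 2$), so $\cN=\es$ and both upper bounds hold trivially. For the inductive step $p\ge 2$, let $\cF_0$ be the nested family of separations chosen in the first round of $\srext$, respectively $\srloc$, on the reduced task, and let $m$ denote its number of unoriented separations, so that $\cF_0$ contributes $2m$ elements to~$\cN$. The edges of the decomposition tree corresponding to $\cF_0$ partition the profiles of $\cP$ among $m+1$ subtasks~$\SPi{i}$, each having $p_i:=|\cP_i|$ profiles with $\sum_i p_i = p$; the strategy then recurses on these, giving $|\cN_{\sExt}\SP| = 2m + \sum_i |\cN_{\sExt}\SPi{i}|$ and likewise for~$\sLoc$.

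For the $\sExt$ bound, the key structural claim I would verify is that each extremal separation $\AB\in\cF_0$ has exactly one profile of $\cP$ on its small side $B$ — that is, each extremal separation peels off a single profile. This uses the maximality built into the definition of an extremal distinguishing separation in a reduced task: if $B$ contained two profiles of~$\cP$, a separation distinguishing them would strictly dominate~$\AB$, contradicting extremality. With this, $m$ of the subtasks have $p_i=1$ and contribute $0$, and at most one remaining central subtask has $p-m$ profiles, contributing at most $2(p-m)$ by induction, for a total of $2m+2(p-m)=2p$; the degenerate cases $p-m\in\{0,1\}$ are handled directly.

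For the $\sLoc$ bound, every local separation has at least one profile on each side, so $p_i\ge 1$ for each subtask. Applying the inductive bound $|\cN_{\sLoc}\SPi{i}|\le 4(p_i-1)$ (which is $0$ when $p_i=1$) yields
\[
|\cN_{\sLoc}\SP| \;\le\; 2m + \sum_{i=1}^{m+1} 4(p_i-1) \;=\; 2m + 4p - 4(m+1) \;=\; 4(p-1) - 2m \;\le\; 4(p-1).
\]
The main obstacle in this plan is the structural claim for $\srext$: one must verify carefully, from the definitions in Part~I, that an extremal separation of a reduced task has exactly one profile on its small side — that is precisely what makes the $\sExt$ upper bound $2p$ rather than $4(p-1)$. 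The corresponding claim for $\srloc$ is closer to the definition of a local separation, and the arithmetic then concludes the proof.
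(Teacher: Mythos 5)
Your lower bounds and your treatment of $\sExt$ follow essentially the same route as the paper: induction on $p$, with the key structural fact that each extremal separation of the reduced task peels off a single profile (this is exactly what the paper imports from Part~I as Lemma~3.2 and~(9)). One small caution there: the paper counts extremal \emph{profiles} rather than chosen separations ($|\srext\SP|\le 2|\cP^{\rm e}|$ and one residual class $\cP\sm\cP^{\rm e}$), which avoids having to argue that distinct extremal separations determine distinct profiles; your count $2m+2(p-m)$ implicitly needs that injectivity, so you would have to check it against Part~I as well.

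The $\sLoc$ half, however, has a genuine gap. The step ``every local separation has at least one profile on each side, so $p_i\ge 1$ for each subtask'' is a non sequitur: relevance of each individual separation in the reduced task does not make every node of the tree induced by the first-round nested set $\cF_0$ inhabited, because a node is a consistent orientation of \emph{all} of $\cF_0$, and such an orientation need not be induced by any profile. Indeed the paper's own tightness example ($n$ large complete graphs threaded on a long path, $k=3$) refutes your claim already in the first round: for each block, $\srloc$ selects the tightest relevant separations hugging it on either side, and the stretch of path strictly between two consecutive blocks becomes a subtask containing no profile at all. With $e$ empty subtasks (which contribute $0$, not $4(p_i-1)=-4$), your computation only gives $2m+4\big(p-(m+1-e)\big)=4(p-1)-2m+4e$, which exceeds $4(p-1)$ unless $e\le m/2$. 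Bounding the number of uninhabited nodes by half the number of chosen separations is precisely the nontrivial content of the paper's proof of \eqref{bd_Loc}: it shows, as~\eqref{eq_oneendess}, that every edge of the decomposition tree has at least one essential endpoint (and that all leaves are essential), using the maximality of the chosen separation $\AB$ in $P\cap\cS_O$ together with its $\cP_O$-relevance to derive a contradiction; from this the inessential nodes number at most half the edges, giving at most $2(p-1)$ edges in total. Your plan contains no substitute for this argument, so as it stands the bound $|\cN_{\sLoc}\SP|\le 4(p-1)$ is not established.
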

\begin{proof}
The two lower bounds, which in fact hold for any strategy, follow from the fact that \cN\ gives rise to a \td\ \TV\ that induces it and distinguishes~\cP: this means that $|\cN| = 2\,(|\cT|-1) \ge 2(p-1)$.

Let us now prove the upper bound in \eqref{bd_Ext}, by induction on~$p$. Let \RP\ be the reduction of \SP. If $p \le 1$ then $\cR=\es$, so the statement is trivial. Now assume that $p \ge 2$. Then $\cS\ne\es$, since \cS\ distinguishes~\cP. Let $\cP^\cE$ be the set of profiles in~\cP\ that are extremal in~\RP. By~\cite[Lemma~3.1]{CDHH13CanonicalAlg} we have $\cP^\cE \neq \emptyset$. Then
\begin{equation}
\cN_{\sExt}\SP =\ \cN\>\cup\!\bigcup_{O\,\in\,\cO_\cN}\!\! \cN_{\sExt} (\cS_O,\cP_O)\,,
\end{equation}
   by definition of~$\cN_{\sExt}\SP$, where $\cN = \srext\SP$. Every extremal ${P\in\cP}$ is distinguished from all the other profiles in~\cP\ by the separation \AB\ for which $P=P_{\AB}$,%
   \COMMENT{}
   so $P$ lies in a singleton class ${\cP_O = \{P\}}$. Then $(\cS_O,\cP_O)$ reduces to $(\es,\cP_O)$, giving ${\cN_{\sExt} (\cS_O,\cP_O) = \es}$ for these $O\in\cO_\cN$. By the uniqueness of~$P_{\AB}$ in \cite[Lemma~3.2]{CDHH13CanonicalAlg}, no separation in $\cN$ separates two non-extremal profiles from~\cP. So there is at most one other partition class~$\cP_O$ with $O\in\cO_\cN\,$.%
   \COMMENT{}
   If such a $\cP_O$ exists it satisfies $\cP_O = \cP\sm \cP^\cE$, and if it is non-empty the $O\in\cO_\cN$ giving rise to it is unique.%
   \COMMENT{}
   Therefore
   $$\cN_{\sExt}\SP = \srext\SP \cup \cN_{\sExt}(\cS_O,\cP\sm \cP^\cE)$$
for this~$O$ if $\cP\sm \cP^\cE\ne\es$, and $\cN_{\sExt}\SP = \srext\SP$ otherwise.%
   \COMMENT{}
   In the first case we have
   $$|\cN_{\sExt}(\cS_O,\cP\sm \cP^\cE)| \le\, 2\,|\cP \sm \cP^\cE|$$
by the induction hypothesis, and in both cases we have $|\srext\SP| \le 2\,|\cP^\cE|$ by \cite[Lemma~3.2 and~(9)]{CDHH13CanonicalAlg}. This completes the proof of~\eqref{bd_Ext}.

For a proof of the upper bound in~\eqref{bd_Loc} let \TV\ be a \td\ of~$G$ that induces $\cN_{\sLoc}\SP$ as in \cite[Theorem~2.2]{CDHH13CanonicalAlg}. Since $\cN_{\sLoc}\SP$ contains only \cP-relevant separations,%
   \COMMENT{}
   all the leaves of \cT\ are essential. Furthermore, we shall prove the following:
\begin{txteq}\label{eq_oneendess}
For every edge $e = t_1t_2$ of \cT, either $t_1$ or $t_2$ is essential.
\end{txteq}

Before we prove~\eqref{eq_oneendess}, let us show how it helps us establish the upper bound in~\eqref{bd_Loc}. If \eqref{eq_oneendess} holds, then all the neighbours of an inessential node are essential. Let $\cT'$ be obtained from~\cT\ by deleting each inessential node and adding an edge from one of its neigbours to all its other neighbours. Let us show that
\begin{txteq}\label{Tprime}
$\cT'$ has $p$ nodes and at least half as many edges as~$\cT$.
\end{txteq}
The first of these assertions holds by definition of \cT\ and~$p$. For the second, note that for each inessential node we delete we lose exactly one edge. So to prove the second claim in~\eqref{Tprime} it suffices to show that $\cT$ has at most $\size\cT/2$ inessential nodes. But this follows from~\eqref{eq_oneendess} and the fact that the leaves of \cT\ are essential: every inessential node has at least two incident edges, and no edge is counted twice in this way (i.e., is incident with more than one inessential node).

By~\eqref{Tprime}, \cT\ has at most $2(p-1)$ edges. Since $\cN_{\sLoc}\SP$ is induced by \TV, this will establish the upper bound in~\eqref{bd_Loc}.

So let us prove~\eqref{eq_oneendess}. Suppose \cT\ has an edge $e = t_1t_2$ with neither $t_i$ essential. Let $\AB \in \cN_\sLoc\SP$ be the separation which $e$ induces. Let $T_A$ denote the component of $\cT - e$ that contains~$t_1$, and let $T_B$ be the component containing~$t_2$. 

At the time \AB\ was chosen by $\sLoc$ we had a nested proper \sys~\cN\ and a consistent orientation $O$ of~\cN\ such that $\AB \in \srloc(\cS_O, \cP_O)$. (When $\cN=\es$ at the start, we have $\srloc(\cS_O,\cP_O) = \SP$.) So there is a profile $P \in \cP_O$ such that \AB\ or \BA\ is maximal in $(P \cap \cS_O, \le)$, say~\AB.%
   \COMMENT{}
By the definition of a task, $P$ orients $\cS$. By Lemma~\ref{living}, therefore, $P$~inhabits a unique node $t\in \cT$, making it essential. Then $\AB \in O(t)$, and hence $t \in T_B$. Since $t_2$ inessential by assumption, $t \neq t_2$. 

The last edge $e'$ on the $t_2$--$t$ path in~\cT\ induces a separation ${\CD \in O(t) \sub P}$, and $\AB \le \CD$, or equivalently, $\DC \le \BA$. Since \AB\ is $\cP_O$-relevant there exists $P' \in \cP_O$ with $\BA \in P'$. Then $\DC \in P'$, since $P'$ orients all of $\cS$ consistently. But then \CD\ splits~$O$, and thus lies in~$\cS_O$. This contradicts the maximality of~\AB, completing the proof of~\eqref{eq_oneendess} and hence of~\eqref{bd_Loc}.
\end{proof}

It is easy to see that the upper bounds in Lemma~\ref{lem_probbounds} are tight. For example, if $G$ consists of $n$ disjoint large complete graphs threaded on a long path, then for $k=3$ the canonical \td\ produced by~\sLoc\ will have $n$ essential parts consisting of these complete graphs and $n-1$ inessential parts consisting of the paths between them. When $n$ is even, this example also shows that the upper bound for~\sExt\ is best possible. In fact, the following example shows that the upper bound in Lemma~\ref{lem_probbounds}\,(i) is best possible for all canonical \td s (regardless of which strategy is used to produce it):

\begin{ex}\label{cycle}
Let $G$ consist of an $n$-cycle $C$ together with $n$ large complete graphs $K_1,\dots,K_n$ each intersecting $C$ in one edge and otherwise disjoint. Then, for $n\ge 3$ and $k=3$, any canonical \td\ of $G$ of adhesion~${<k}$ either has exactly one part or exactly the parts $C,K_1,\dots,K_n$. This is because the 2-separations of~$G$ that induce 2-separations of~$C$ cannot be induced by a canonical \td\ of~$G$, since they cross their translates under suitable automorphisms of~$G$.
\end{ex}

It is more remarkable, perhaps, that the upper bound in Lemma~\ref{lem_probbounds}\,(i) is so low: that at most one part of the \td\ is not inhabited by a profile from~$\cP$. For if $G$ is $(k-1)$-connected and \cS\ is the set of all proper $(<k)$-separations, then every task \SP\ is feasible \cite[Lemma~4.1]{CDHH13CanonicalAlg}, and hence Lemma~\ref{lem_probbounds} gives the right overall bounds.

If $G$ is not $(k-1)$-connected, the original task \SP\ need not be feasible, and we have to use iterated strategies. Let $\sExt^k$ denote the $k$-strategy all whose entries are~$\sExt$, and let $\sLoc^k$ denote the $k$-strategy which only uses $\sLoc$. Interestingly, having to iterate costs us a factor of~2 in the case of~\sExt, but it does not affect the upper bound for~\sLoc. Hence for iterated strategies the two bounds coincide:

\begin{thm}\label{thm_itbounds}
Let \cP\ be any set of $k$-profiles of $G$, and $p := |\cP|$. Let $\cN_{\sExt^k}(\cP)$ and $\cN_{\sLoc^k}(\cP)$ be obtained with respect to the set \cS\ of all proper $(<k)$-separations.
\begin{enumerate}[\rm(i)]
\item $2(p-1) \le |\cN_{\sExt^k}(\cP) | \le  4\,(p-1)$
\item $2(p-1) \le |\cN_{\sLoc^k}(\cP) | \le  4\,(p-1)$
\item If $G$ is $(k-1)$-connected, then $|\cN_{\sExt^k}(\cP) | \le  2p$.
\end{enumerate}
\end{thm}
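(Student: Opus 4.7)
The plan is to derive the three bounds in parallel, reusing the two arguments from Lemma~\ref{lem_probbounds} as much as possible. The lower bound $2(p-1) \le |\cN|$ is the same in all three cases: $\cN$ induces a \td{} distinguishing~$\cP$, so its decomposition tree has at least $p$ nodes and hence at least $p-1$ edges, giving $|\cN| = 2|E(\cT)| \ge 2(p-1)$. Part~(iii) is then almost immediate: if $G$ is $(k-1)$-connected then, by \cite[Lemma~4.1]{CDHH13CanonicalAlg}, every task is feasible, so $\sExt^k$ collapses to a single application of $\sExt$ to the feasible initial task $(\cS,\cP)$, and the bound $|\cN_{\sExt^k}(\cP)| \le 2p$ is exactly \eqref{bd_Ext}.

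For~(ii) I would imitate the proof of \eqref{bd_Loc}, extended to the iterated setting. Let $\cT$ be the decomposition tree induced by $\cN_{\sLoc^k}(\cP)$. It suffices to establish (a) every leaf of $\cT$ is essential, and (b) every edge of $\cT$ has at least one essential endpoint. Given (a) and (b), the same contraction of inessential nodes into an essential neighbour gives $|V(\cT)| \le 2p - 1$, hence $|\cN_{\sLoc^k}(\cP)| \le 4(p-1)$. Property~(a) is clear because $\sLoc$ only returns $\cP$-relevant separations at each step of the iteration. For~(b) I would rerun the argument for \eqref{eq_oneendess}: if an edge $e = t_1t_2$ had both endpoints inessential, the induced separation $\AB$ was chosen at some iteration step by $\sLoc$ as a $\le$-maximal element of $P \cap \cS_O$ for some $P \in \cP_O$; Theorem~\ref{living}(i) produces a unique node $t$ inhabited by~$P$ in the final $\cT$, and the last edge $e'$ of the $t_2$--$t$ path in $\cT$ yields some $\CD \in P$ with $\AB \le \CD$ that still splits~$O$, contradicting the maximality of~$\AB$.

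For~(i) the same contraction scheme should give $|\cN_{\sExt^k}(\cP)| \le 4(p-1)$ once (a) and (b) are established. Property~(a) again holds because $\sExt$ returns only $\cP$-relevant separations. For~(b) the maximality tool of~(ii) is unavailable, but I would instead use the structural description of extremal separations from \cite[Lemma~3.2]{CDHH13CanonicalAlg}: every separation $\AB$ chosen by $\sExt$ in a reduced feasible sub-task is witnessed by a unique extremal profile $P_{\AB}$, and $P_{\AB}$ is the only profile of that sub-task on one specified side of~$\AB$. Since subsequent iteration steps only refine $\cN$ by further nested separations, $P_{\AB}$ continues to orient them consistently and ends up inhabiting a node on that specified side of the corresponding edge~$e$, which then inherits an essential endpoint.

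The hard part, shared by~(i) and~(ii), is the bookkeeping across iteration levels: a profile's inhabited node in the final tree is determined by the whole of~$\cN$, not merely by the separations added at its own level, so one must verify that later iteration steps neither displace a witnessing profile's inhabited node away from the desired side of an earlier edge nor produce any edge with two inessential endpoints. In~(ii) the maximality of~$\sLoc$ resolves this essentially by itself, whereas in~(i) one has to argue separately that the unique extremal profile of each separation retains its orientation throughout the iteration — and this consistency claim is where I expect the main technical work to lie.
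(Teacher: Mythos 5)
Your treatment of the lower bounds and of part (iii) coincides with the paper's and is fine. The problem lies in your route to the upper bounds in (i) and (ii): everything hinges on the claim that in the \emph{final} decomposition tree every edge has an essential endpoint, and neither of your two arguments for this survives the iteration. For \sLoc, the maximality of \AB\ is only local to the level $\ell$ at which it was chosen: \AB\ is maximal in $P\cap\cS'_O$, where $P$ is an $\ell$-profile and $\cS'$ contains only the proper separations of that level's order which are nested with the previously chosen separations. The separation \CD\ induced by the last edge of the $t_2$--$t$ path may have been added at a \emph{later} level; it then has larger order, lies outside the system in which \AB\ was maximal, and so yields no contradiction. (Moreover $P$ itself need not orient \CD\ at all; you would have to pass to a $k$-profile extension of $P$, which only worsens matters, since the separations met on the way to its node are arbitrary members of the final~\cN.) For \sExt\ the corresponding step fails even more plainly: the uniqueness of $P_{\AB}$ from \cite[Lemma~3.2]{CDHH13CanonicalAlg} only tells you that its $k$-profile extension inhabits \emph{some} node of $T_B$; once later levels subdivide $T_B$ by separations $\ge\AB$, that node need not be the endpoint of the edge representing \AB, so the edge does not ``inherit an essential endpoint''. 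Thus precisely the cross-level statement you flag as the remaining technical work is unproven, your sketched arguments for it would fail as stated, and it is not even clear that the edge property holds for the iterated algorithms. (Your step (a) also needs a small extra remark: the separations of a sub-task are relevant for that sub-task's profile set, and one must lift this to $k$-profiles in \cP\ via extensions.)

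The paper avoids analysing the final tree altogether. It encodes the recursion in a rooted tree whose vertex set is $\{\es\}\cup\cP_1\cup\dots\cup\cP_k$, joining each $P\in\cP_\ell$ to the $(\ell-1)$-profile it induces; the internal vertices correspond to the sub-tasks. After suppressing vertices with exactly one child one obtains a tree $T'$ with $p$ leaves whose internal vertices have degree at least~$3$ (the root at least~$2$), hence at most $p-1$ internal vertices. Applying Lemma~\ref{lem_probbounds} \emph{per sub-task} (at most $2c(v)$ separations for \sExt\ and $4(c(v)-1)$ for \sLoc, with $c(v)$ the number of children) and summing over $T'$ gives $2\size{T'}=2(p+i-1)\le 4(p-1)$ and $4\bigl(\size{T'}-i\bigr)=4(p-1)$, respectively. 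If you wish to keep your route, you would have to prove directly that the iterated construction never produces an edge with two inessential endpoints; the level-local maximality and extremality facts you cite do not give this.
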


\medbreak\proof
The lower bounds for~\cN\ follow as in the proof of Lemma~\ref{lem_probbounds}. Statement (iii) reduces to Lemma~\ref{lem_probbounds}\,(i), since $\sExt^k = \sExt$ now and the entire task~\SP\ is feasible \cite[Lemma~4.1]{CDHH13CanonicalAlg}.

For the proof of the upper bounds in (i) and~(ii), let us define a rooted tree $(T,r)$ that represents the recursive definition of $\cN_{\sExt^k}$ and $\cN_{\sLoc^k}$, as follows. Let 
 $$V(T) := \{\emptyset\}\cup\!\! \bigcup_{1\le\ell\le k}\!{\cP_\ell}\,;$$ 
recall that $\cP_\ell$ for $\ell\le k$ is the set of all $\ell$-profiles of~$G$ that extend to a $k$-profile in~$\cP$. We select $r = \emptyset$ as the root, and make it adjacent to every $P\in\cP_1$. For $2\le\ell\le k$ we join $P\in\cP_\ell$ to the unique $P'\in\cP_{\ell-1}$ which it induces (i.e., for which $P'\sub P$). This is clearly a tree, with levels $\{\es\},\cP_1,\dots,\cP_k$. Let us call the vertices of $T$ that are not in~$\cP_k$ its \emph{internal} vertices.

The internal vertices of $T$ correspond bijectively to the tasks which our iterated algorithm, either $\sExt^k$ or~$\sLoc^k$, has to solve. Indeed, at the start the algorithm has to solve the task~\SPp\ with $\cS'$ the set of proper 0-separations of $G$ and $\cP'=\cP_1$ the set of 0-profiles that extend to a $k$-profile in~$\cP$. This task corresponds to~$r$ in that $\cP'$ is the set of children of~$r$. Later, for $\ell=2,\dots,k$ recursively, the algorithm at step~$\ell$ receives as input some tasks \SPp, one for every $P\in\cP_{\ell-1}$, in which $\cP'$ is the set of $\ell$-profiles in~$\cP_\ell$ extending~$P$, and $\cS'$ is the set of proper $(\ell-1)$-separations of $G$ that are nested with the set $\cN_{\ell-1}$ of nested $(<\ell-1)$-separations distinguishing~$\cP_{\ell-1}$ which the algorithm has found so far. This task corresponds to $P\in V(T)$ in the same way, in that $\cP'$ is the set of children of~$P$.

Let $c(v)$ denote the number of children of an internal vertex $v$. Since \sExt\ and \sLoc\ reduce every task before they solve it, the task \SPp\ corresponding to a vertex $v$ will add a separation to~\cN\ only if $c(v) = |\cP'| \ge 2$. Let $(T',r')$ be obtained from~$(T,r)$ by suppressing any vertices with exactly one child; if $r$ is suppressed, its first descendant with more than one child becomes the new root~$r'$. The internal vertices of $T'$ thus have degree at least~$3$, except that $r'$ has degree at least~$2$. Let $i$ denote the number of internal vertices of~$T'$. Since the number of (non-root) leaves of $T'$ is exactly~$p$, we have at most $(p-1)$ internal vertices, that is, $i \le p-1$.%
   \COMMENT{}

Now consider the construction of $\cN_{\sExt^k}(\cP)$. By \eqref{bd_Ext} in Lemma~\ref{lem_probbounds}, each internal vertex~$v$ of~$T'$ contributes at most $2c(v)$ separations. So there are at most twice as many separations in $\cN_{\sExt^k}(\cP)$ as there are edges in $T'$:
   $$|\cN_{\sExt^k}(\cP)| \le 2\,\size{T'} = 2(p + i -1) \le 4(p-1).$$

During the construction of $\cN_{\sLoc^k}(\cP)$, each internal vertex~$v$ of~$T'$ contributes at most $4(c(v) -1)$ separations, by \eqref{bd_Loc} in Lemma~\ref{lem_probbounds}. Writing $I$ for the set of internal vertices of~$T'$, we thus obtain%
   \COMMENT{}
$$|\cN_{\sLoc^k}(\cP)| \le 4\sum_{v\in I} \big(c(v)-1\big) = 4\big(\size{T'} -\> i\big) = 4\big( |T|- i - 1) = 4(p-1).\qed$$

It is easy to construct examples showing that all these bounds are sharp. Instead, let us give an example where $\sLoc$ yields the best possible result of $2(p-1)$, while $\sExt$ does not:

\begin{ex}\label{ex_locbetter}
Consider the $3$-connec\-ted graph with four $4$-blocks shown in Figure~\ref{fig_locbetter}. The grey bars indicate separators of chosen separations. Algorithm \sExt\ chooses all these separations: first the two pairs of outer separations, then the two pairs of inner separations. On the other hand, \sLoc\ will choose the three pairs of `straight' separations at the first step, and no further separations thereafter. Therefore \sExt\ chooses one pair of separations more than \sLoc\ does.
\begin{figure}[h]
\begin{center}
\includegraphics[width=.45\textwidth]{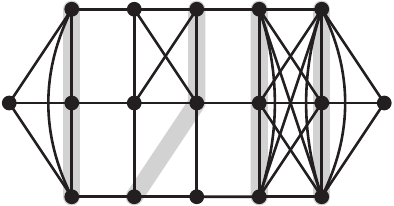}
\caption{A graph where \sLoc\ chooses fewer separations than \sExt.}\label{fig_locbetter}
\end{center}\vskip-12pt
\end{figure}
\end{ex}

\section{Bounding the size of the parts}\label{junk}

One of the first questions one may ask about canonical \td s is whether they can be chosen so as to witness the tree-width of the graph. Choosing the cycle $C$ in Example~\ref{cycle} long, however, shows that this will not in general be the case: restricting ourselves to a set of separations that is invariant under all the automorphisms of~$G$ can result in arbitrarily large parts, and these need not even be essential.%
   \COMMENT{}

However, if we restrict our attention from arbitrary $k$-profiles to (those induced by) $k$-blocks, we can try to make the essential parts small by reducing the junk they contain, the vertices contained in an essential part that do not belong to the $k$-block that made this part essential. Note that this aim conflicts with our earlier aim to reduce the number of inessential parts: since this junk is part of~$G$, expunging it from the essential parts will mean that we have to have other parts to accommodate it.

In general, we shall not be able to reduce the junk in essential parts to zero unless we restrict the class of graphs under consideration. Our next example shows some graphs for which any \td\ of adhesion at most~$k$, canonical or not, has essential parts containing junk. The amount of junk in a part cannot even be bounded in terms the size of the $k$-block inhabiting it.\looseness=-1

\begin{ex}\label{ex_junky}
Consider the 4-connected graph obtained by joining two adjacent vertices $x,y$ to a $K^5$ as in Figure~\ref{fig_junky}. This graph has a single $5$-block~$K$, the vertex set of the~$K^5$. In any \td\ of adhesion at most~$4$, the part containing $K$ will contain $x$ or $y$ as well: since the 4-separations that separate $x$ and $y$ from~$K$ cross, at most one of them will be induced by the decomposition.

\begin{figure}[h]
\begin{center}
\includegraphics{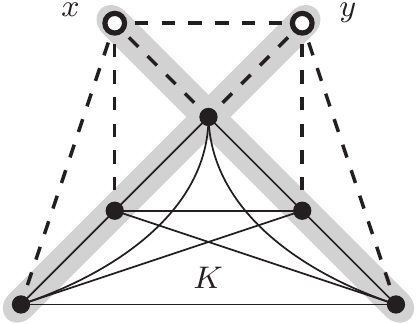}
\caption{A $K^5$ with unavoidable junk attached}\label{fig_junky}
\end{center}\vskip-12pt
\end{figure}

To increase the amount of junk in the part containing~$K$, we can attach arbitrarily many pairs of adjacent vertices to the~$K^5$ in the same way as we added $x$ and~$y$. This will not increase the size of the $5$-block~$K$, but the part containing~$K$ will also contain at least one vertex from each of those pairs.
\end{ex}

The following theorem shows that the obstruction to obtaining essential parts without junk illustrated by the above example is, in a sense, the only such obstruction. Let us call a $k$-block $X$ \emph{well separated\/} in a \sys~\cS\ of proper $(<k)$-separations if the $k$-profile $P_k(X)\cap\cS$ it induces in~\cS\ is well separated, that is, if the maximal elements of $P_k(X)\cap\cS$ are nested with each other. (This fails in the example.) Recall that a separation \AB\ is \emph{tight} if every vertex in $A\cap B$ has a neighbour in~$A\sm B$ and a neighbour in~$B\sm A$.

\begin{thm}\label{thm_leafnice}
Let $1\le k\in\N$, and let \cS\ be a set of proper $(<k)$-separations that includes all the tight $(<k)$-separations.%
   \COMMENT{}
   Then every graph $G$ has a canonical\,%
   \footnote{Here, this means that the \td\ will be invariant under those automorphisms of~$G$ that act on~\cS. For example, this is the case for all the automorphisms of~$G$ if \cS\ consists of all the tight $(<k)$-separations.}
    \td\ all whose separations induced by tree-edges are in~\cS\ such that
\begin{enumerate}[\rm (i)]\itemsep=0pt\vskip-3pt\vskip0pt
\item distinct $k$-blocks lie in different parts;
\item parts containing a $k$-block that is well separated in~\cS\ coincide with that $k$-block;
\item if the task \SP\ with \cP\ the set of all $k$-block profiles is reduced and feasible,\penalty-200 then every leaf\,%
   \COMMENT{}
   part is a $k$-block.%
   \footnote{Recall that \SP\ is feasible, for example, if $G$ is $(k-1)$-connected.}%
   \COMMENT{}
\end{enumerate}
Every such decomposition that satisfies~{\rm (i)}, but not necessarily {\rm (ii)} or~{\rm (iii)}, can be refined to such a \td\ that also satisfies~{\rm (ii)} and~{\rm (iii)}.
\end{thm}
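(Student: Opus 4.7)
The plan is to establish the refinement assertion first and derive the existence claim from it. For the initial decomposition, let $(\cT_0,\cV_0)$ be a canonical \td\ satisfying~(i), with inducing nested \sys\ $\cN_0\subseteq\cS$; this exists by applying any $k$-strategy from~\partOne\ to the task $(\cS,\cP)$, where $\cP$ is the set of all $k$-block profiles of~$G$. Distinct $k$-blocks lie in distinct parts by Theorem~\ref{living}, so~(i) is immediate.

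To obtain~(ii) by refinement, for each well-separated $k$-block~$X$ I would let $M_X$ denote the set of maximal elements of $P_k(X)\cap\cS$, and then set $\cN^* := \cN_0 \cup \bigcup_X M_X$, closed under inversion. The core technical claim is that $\cN^*$ is nested. Pairs inside $\cN_0$ are nested by hypothesis, and pairs inside a single $M_X$ are nested by the definition of well-separated; the mixed case~-- a pair $(A,B)\in M_X$ versus $(E,F)\in\cN_0\cup M_{X'}$~-- is the heart of the argument and would be handled by a corner/shifting argument. Were these to cross, one of the four corners, suitably tightened, would be a tight $(<k)$-separation, hence in~$\cS$; it would belong to $P_k(X)$ by the profile axiom~(P), and would strictly dominate $(A,B)$ in the separation order, contradicting the maximality of $(A,B)$. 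Once $\cN^*$ is nested, the induced refinement $(\cT^*,\cV^*)$ satisfies~(ii): every junk vertex $v\in V_{t_X}\setminus X$ is separated from~$X$ by some tight $(<k)$-separation in~$\cS$, so a maximal such separation lies in~$M_X$ and excludes~$v$ from the new part containing~$X$.

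For~(iii), under the additional hypothesis that $(\cS,\cP)$ is reduced and feasible, I would first show that every $k$-block profile is well-separated in~$\cS$: using tools from~\partOne\ (in particular \cite[Lemma~3.2]{CDHH13CanonicalAlg}), a pair of crossing maximal elements of some $P_k(X)\cap\cS$ would either yield, via the corner argument, a separation that distinguishes no profiles in~$\cP$ (contradicting reducedness) or obstruct the extremal element required for feasibility. Then~(ii) gives $V^*_{t^*_X}=X$ for every $k$-block~$X$, and the behaviour of iterated~$\sExt$ on a reduced, feasible task ensures that every leaf of~$\cT^*$ is essential~-- hence, by the preceding, equal to a $k$-block. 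Canonicity is preserved at every step, since $\cN_0$ is canonical by~\partOne\ and the map $X\mapsto M_X$ is equivariant under those automorphisms of~$G$ that act on~$\cS$. I expect the main obstacle to be the mixed subcase of the nestedness argument for $\cN^*$: locating the right tight corner that lies in~$\cS$ and extends the profile strictly past~$(A,B)$ is precisely where the tightness hypothesis on~$\cS$ does essential work~-- without it, crossing pairs need not admit any nested replacement.
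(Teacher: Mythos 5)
Your overall skeleton (start from a canonical nested $\cN_0\sub\cS$ giving (i), refine by adding the maximal separations of the well-separated block profiles, then kill junk vertices by maximality) is the paper's skeleton, and your final step for (ii) and the canonicity remark are fine. The genuine gap is the nestedness of $\cN^*$ in the mixed case, which is exactly the point the paper does not prove by hand but imports as \cite[Corollary~3.5]{CDHH13CanonicalAlg} (locally maximal separations of a task are nested with \emph{all} of $\cS$). Your one-step corner argument does not work as sketched: if $(A,B)\in M_X$ crosses $(E,F)$, submodularity only guarantees that in each pair of \emph{opposite} corners at least one has order $<k$, and this may be the corner lying \emph{below} $(A,B)$ (and below $(E,F)$), which contradicts nothing; the corner dominating $(A,B)$ may well have order $\ge k$. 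This is not a technicality one can wave away, because maximality of $(A,B)$ in $P_k(X)\cap\cS$ alone simply does not imply nestedness with the rest of $\cS$: in Example~\ref{ex_junky} the two crossing $4$-separations are both maximal in the $5$-block's profile, and the low-order corner is the one pointing away from the block. Your sketch never uses the well-separatedness of $X$ in the mixed case, yet that hypothesis is precisely what makes the claim true; proving it requires the more careful argument behind Corollary~3.5 (compare $(E,F)$ with the maximal elements of $P_k(X)\cap\cS$ above it and use their mutual nestedness), not a single corner exchange. There is also a secondary issue even in the favourable case: tightening a corner to force it into $\cS$ can delete separator vertices from its big side, after which it need no longer dominate $(A,B)$ or contain $X$ on its small side, so the ``suitably tightened'' step needs justification as well. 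The clean fix is to do what the paper does: add the locally maximal separations of \SP\ (which, for a well-separated $X$, include all of $M_X$) and quote Corollary~3.5 for nestedness with $\cS$.

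For (iii) your route is also off. You claim that reducedness and feasibility of \SP\ force \emph{every} $k$-block profile to be well separated; that is much stronger than what is needed, it is not justified by your sketch, and I see no reason it holds in general. Moreover ``the behaviour of iterated $\sExt$'' is irrelevant here, since your decomposition is induced by $\cN^*$, not produced by $\sExt$; that leaves are essential should instead come from reducedness (every separation in $\cN^*\sub\cS$ is \cP-relevant, so the leaf-edge separation puts a $k$-block into the leaf part). The paper's argument is purely local to the leaf: with $B=V_t$ the leaf part and $(A,B)$ the leaf-edge separation, reducedness gives a unique $k$-block $X\sub V_t$; choosing $(A',B')\ge(A,B)$ maximal in $\cS$, reducedness again places a block, hence $X$, in $B'$, so $(A',B')\in P_k(X)$; feasibility together with \cite[Lemma~3.1]{CDHH13CanonicalAlg} makes $(A',B')$ extremal, so $P_k(X)$ is extremal and therefore well separated, and (ii) then yields $V_t=X$. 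You should replace your global well-separatedness claim by this local extremality argument.
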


\begin{proof}
Let $\cP$ be the set  of all $k$-block profiles in~$G$. Let $\cN\sub\cS$ be any nested \sys\ that distinguishes all its $k$-blocks and is canonical, i.e., invariant under the auto\-morphisms of~$G$. Such a set $\cN$ exists by \cite[Theorem~4.4]{CDHH13CanonicalAlg}. (The separations provided by that theorem are tight, and hence in~\cS, because they are \cP-essential, i.e., distinguish two profiles in~\cP\ `efficiently'; see~\cite{CDHH13CanonicalAlg}.) Then the \td\ \TV\ that induces $\cN$ by \cite[Theorem~2.2]{CDHH13CanonicalAlg} satisfies~(i).

For~(ii) we refine \cN\ by adding the locally maximal separations of~\SP\ and their inverses.%
   \COMMENT{}
   These are nested with~\cS\ by \cite[Corollary~3.5]{CDHH13CanonicalAlg}. Hence the refined \sys\ $\cN\,'$ is again nested, and therefore induced by a \td\ $(\cT',\cV')$. This decomposition is again canonical, since the set of locally maximal separations is invariant under the automorphisms of~$G$. Clearly, $(\cT',\cV')$ still satisfies~(i).

To show that $(\cT',\cV')$ satisfies~(ii), suppose it has a part that contains a well separated $k$-block $X$ and a vertex $v$ outside $X$. By the maximality of~$X$ as a ${(<k)}$-inseparable set, there is a separation $(A,B)\in \cS$ with $X\sub B$ and ${v\in A\sm B}$. Clearly, $\AB\in P_k (X)$; choose $(A,B)$ maximal in~$P_k (X)$, the $k$-profile that $X$ induces. Then $\AB\in\cN\,'$, by definition of~$\cN\,'$. This contradicts our assumption that $v$ lies in the same part of $(\cT',\cV')$ as~$X$.

To show that the decomposition $(\cT',\cV')$ obtained for~(ii) also satisfies~(iii), consider a leaf part~$V_t$. By the assumption in~(iii), the separation $\AB\in\cN\,'$ that corresponds to the edge of $\cT'$ at~$t$ and satisfies $B=V_t$ distinguishes two $k$-blocks. Let $X$ be the $k$-block in~$V_t$; it is unique, since $\cN\,'$ distinguishes~\cP\ but no separation in~$\cN\,'$ separates~$V_t$.%
   \COMMENT{}
   Let $P=P_k(X)$ be the $k$-profile that $X$ induces. Let $(A',B')\ge\AB$ be maximal in~$\cS$. By assumption in~(iii), $B'\sub B$ too contains a $k$-block, which can only be~$X$. Hence $(A',B')\in P$.

Since \SP\ is reduced and feasible, by assumption in~(iii), \cite[Lemma~3.1]{CDHH13CanonicalAlg} implies that $(A',B')$ is extremal in~\cS. Hence~$P$ is extremal,%
   \COMMENT{}
   and therefore well separated. We thus have $V_t=X$ by~(ii).
\end{proof}

The idea behind allowing some flexibility for \cS\ in Theorem~\ref{thm_leafnice} is that this can make (ii) stronger by making more $k$-blocks well separated. For example, consider a \td\ whose parts are all complete graphs~$K^5$ and whose separations induced by tree edges all have order~3. The $k$-blocks for $k=5$ are the $K^5$s, but none of these is well-separated in the set \cS\ of all proper $(<k)$-separations, since the natural 3-separations can be extended in many ways to pairwise crossing 4-separations that will be the locally maximal separations. However all the $k$-blocks are separated in the smaller set $\cS'$ of all proper $(<4)$-separations, which are precisely the tight $(<k)$-separations. So applying the theorem with this $\cS'$ would exhibit that the essential parts of our decomposition are in fact $k$-blocks, a fact the theorem applied with~\cS\ cannot see. 

However, even with $\cS$ the set of tight $(<k)$-separations, Theorem~\ref{thm_leafnice}\,(ii) can miss some parts in canonical \td s that are in fact $k$-blocks, because they are not well separated even in this restricted~\cS:

\begin{ex}\label{BadSepkBlockEx}
Let $G$ consist of a large complete graph $K$ to which three further large complete graphs are attached: $K_1$ and~$K_2$ by separators $S_1$ and~$S_2$, respectively, and $K_{12}$ by the separator $S_1\cap S_2$. If $|S_1| = |S_2| = k-1$ and $S_1\ne S_2$,%
   \COMMENT{}
   the separations $(K_1\cup K_{12}, K\cup K_2)$ and $(K_2\cup K_{12}, K\cup K_1)$ are maximal in $P_k(K)\cap\cS$ for the $k$-block $K$ and the set \cS\ of all tight $(<k)$-separations. They cross, since both have $K_{12}$ on their `small' side (Fig.~\ref{BadSepkBlockFig}).

   \begin{figure}[htpb]
\centering
   	  \includegraphics{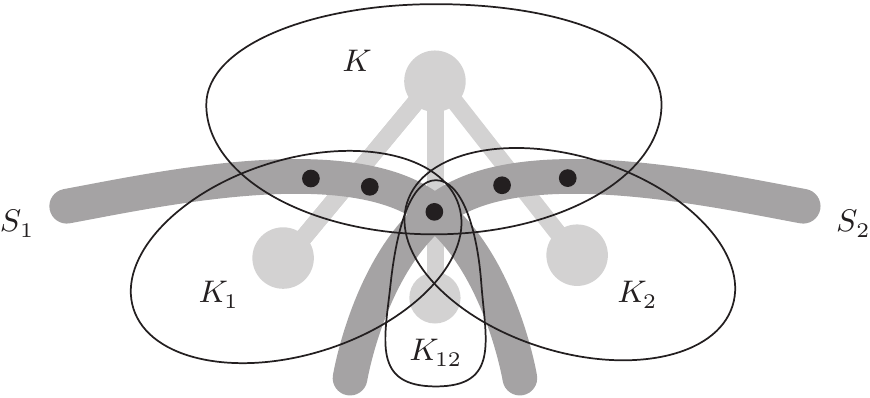}
   	  \caption{The 4-block $K$ is a decomposition part but is not well-separated}
   \label{BadSepkBlockFig}\vskip-3pt\vskip0pt
   \end{figure}

\goodbreak

So $K$ is not well separated. But the (unique) canonical \td\ of $G$ that distinguishes its $k$-blocks still has $K$ as a part: its parts are the four large complete graphs, the decomposition tree being a star with centre~$K$.
\end{ex}

We wonder whether the notion of being well separated can be weakened,%
   \COMMENT{}
   or applied to a suitable set $\cS$ of $(<k)$-separations, so as to give Theorem~\ref{thm_leafnice}\,(ii) a converse: so that every graph has a canonical \td\ that distinguishes its $k$-blocks, whose separations induced by decomposition tree edges are in~$\cS$, and in which every well separated $k$-block is a part, while conversely every $k$-block that occurs as a part in such a \td\ is well separated in~$\cS$.%
   \COMMENT{}

Here is an attempt. Given a $k$-block~$X$, let $\cS(X)$ denote the set of all tight separations \AB\ such that $X\sub B$ and $A\sm B$ is a component of~$G-X$.%
   \COMMENT{}
   The separations in~$\cS(X)$ are clearly nested.%
   \footnote{However, Example~\ref{BadSepkBlockEx} with $X=K_1$ and $X'=K_2$ shows that for distinct $k$-blocks~$X,X'$ the sets $\cS(X)$ and $\cS(X')$ need not be nested: the separation $\AB\in\cS(X)$ with $A=K\cup K_2$ and $B= K_1\cup K_{12}$ crosses the separation $(A',B')\in\cS(X')$ with $A'=K\cup K_1$ and $B= K_2\cup K_{12}$.}
    Write $\cS_k$ for the set of all $(<k)$-separ\-ations of~$G$. Then the condition that
\begin{equation}
\cS(X)\sub\cS_k\label{SXSk}
\end{equation}
   is a weakening of $X$ being well-separated in~$\cS_k$. Indeed, if $\AB\in\cS(X)$ is not in~$\cS_k$, i.e.\ has order~$\ge k$, we can find two crossing separations both maximal in~$P_k(X)$, as follows. Pick a vertex $a\in A\sm B$. By the maximality of $X$ as a $(<k)$-insep\-arable set, our vertex~$a$ can be separated from~$X$ by some $\CD\in\cS_k$, say with $a\in C\sm D$ and $X\sub D$. Then $\CD\in{P_k(X)}$. Replace \CD\ with any maximal separation in~${P_k(X)}$ that is greater than it, and rename that separation as~\CD. Then still $a\in C\sm D$ and $X\sub D$. As $A\sm B$ is connected, and $A\cap B\sub X$ has size~$\ge k$ although \AB\ is tight, it follows that $C\cap D$ contains a vertex~$a'\in A\sm B$. Like~$a$, the vertex $a'$ is separated from $X$ by some maximal separation $(C',D')\in{P_k(X)}$. The separations \CD\ and $(C',D')$  are easily seen to cross,%
   \COMMENT{}
   so $X$ is not well separated.

On the other hand, condition~\eqref{SXSk} holds for every $k$-block $X$ that does occur as a part in a \td\ of adhesion~$<k$. Thus if \eqref{SXSk} is still strong enough to imply that $X$ is a part in some, or any, canonical such \td, we shall have our desired converse of Theorem~\ref{thm_leafnice}\,(ii) with \eqref{SXSk} replacing `well separated'.

Given~$k$, call a \td\ of a graph \emph{good} if it is canonical, has adhesion~$<k$,%
   \COMMENT{}
   and distinguishes all the $k$-blocks of~$G$ \emph{efficiently}: any two of them are separated by an adhesion set whose order is minimum among all the sepators in $G$ between those blocks. The following result, which had been conjectured in the original preprint of this paper, was announced by Carmesin and Gollin~\cite{CG14:isolatingblocks}:

\begin{thm}\label{7} For every $k$, every finite graph has a good \td\ in which every $k$-block $X$ that satisfies~\eqref{SXSk} is a part.
\end{thm}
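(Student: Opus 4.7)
The plan is to construct the required \td\ by starting from a canonical good \td\ and appending, inside each part that contains a $k$-block $X$ satisfying \eqref{SXSk}, a canonically chosen family of separations from $\cS(X)$.

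First I would invoke \cite[Theorem~4.4]{CDHH13CanonicalAlg}, as in the proof of Theorem~\ref{thm_leafnice}, to fix a canonical nested \sys\ $\cN_0\sub\cS_k$ of tight $(<k)$-separations that distinguishes all $k$-blocks of~$G$ efficiently. The induced canonical good \td\ $(\cT_0,\cV_0)$ assigns each $k$-block $X$ to a unique part~$V_t$. For each $X$ satisfying~\eqref{SXSk} I would then adjoin to $\cN_0$ the \emph{local} subset $\cS_{\rm loc}(X)\sub\cS(X)$ of those $(A,B)\in\cS(X)$ whose small side $A\sm B$ (a component of $G-X$) lies inside~$V_t$. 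Condition~\eqref{SXSk} puts $\cS(X)\sub\cS_k$, so all candidates have order $<k$, and $\cS(X)$ is itself nested since components of $G-X$ are pairwise disjoint. Since $X\mapsto\cS_{\rm loc}(X)$ is automorphism-invariant, the enlargement keeps the \sys\ canonical.

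Nestedness of the resulting \sys\ $\cN:=\cN_0\cup\bigcup_X\cS_{\rm loc}(X)$ then has two potential sources of trouble. With~$\cN_0$, locality is tailored so that each adjoined separation refines only the single part~$V_t$ and is nested with every separation of~$\cN_0$. Across different $k$-blocks $X,X'$, the full sets $\cS(X)$ and $\cS(X')$ may cross (as shown by the footnote to~\eqref{SXSk}), but the local portions actually added live inside the distinct parts of $(\cT_0,\cV_0)$ inhabited by~$X$ and~$X'$, and so cannot cross one another.

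The main obstacle is to show that $\cS_{\rm loc}(X)$ really suffices to peel all junk off~$V_t$: equivalently, that every component $U$ of $G-X$ which meets~$V_t$ is actually wholly contained in~$V_t$. If $U$ straddled two parts of $(\cT_0,\cV_0)$ across the adhesion $S=C\cap D$ of some $(C,D)\in\cN_0$, then $U$ being connected in $G-X$ and $X\sub V_t\sub C$ would force the vertex of $U\cap S$ to lie outside~$X$. Applying submodularity to the crossing pair $(U\cup N(U),\,V\sm U)$ and $(C,D)$, and combining the bound $|N(U)|<k$ from~\eqref{SXSk} with the efficiency of~$(C,D)$ and the maximality of~$X$ as a $(<k)$-inseparable set, one should extract either a strictly larger $(<k)$-inseparable set extending~$X$, contradicting maximality, or a canonical substitute for $(C,D)$ of the same order that places $U$ entirely on one side. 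Once this technical step is settled, the enlarged \sys\ $\cN$ is nested, canonical, of adhesion~$<k$, distinguishes all $k$-blocks efficiently (no separation of $\cN_0$ has been dropped), and isolates every \eqref{SXSk}-block as a part, giving the \td\ announced in~\cite{CG14:isolatingblocks}.
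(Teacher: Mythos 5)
The decisive step is exactly the one you yourself label ``the main obstacle'', and it is not proved. Your refinement removes a junk vertex $v\in V_t\sm X$ only when its component $U$ of $G-X$ lies wholly inside $V_t$; nothing in \eqref{SXSk} or in the choice of $\cN_0$ prevents a separator of $\cN_0$ --- chosen canonically to distinguish two \emph{other} $k$-blocks efficiently --- from cutting through such a component, in which case the piece of $U$ inside $V_t$ survives your refinement and $X$ is not a part. The dichotomy you sketch to resolve this does not follow from the stated ingredients. Writing $(A,B):=(U\cup N(U),V\sm U)$ and letting $(C,D)\in\cN_0$ cross it, submodularity only guarantees that \emph{one} of the two corners has small order, and it may well be the corner $(A\cap C,B\cup D)$, which is useless for pushing $U$ onto one side; the useful corner $(A\cup C,B\cap D)$ may have order up to $2k-2\ge k$, or have order exceeding $|C\cap D|$ and hence destroy the efficiency required of a good \td, so you obtain neither a larger $(<k)$-inseparable set containing $X$ (contradicting maximality) nor a same-order replacement for $(C,D)$. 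Moreover, a ``canonical substitute'' for $(C,D)$ need not exist at all: crossing separations can be translates of one another under automorphisms (cf.\ Example~\ref{cycle}), so corners cannot in general be chosen canonically; and invoking such a replacement contradicts your closing claim that no separation of $\cN_0$ has been dropped, besides forcing a global re-verification of nestedness, canonicity and efficient distinction of all $k$-blocks.

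For comparison: the paper itself contains no proof of Theorem~\ref{7}. It records the result as announced by Carmesin and Gollin~\cite{CG14:isolatingblocks} and notes that the proof ``is quite involved and builds on \cite[Theorem~5.2]{confing}''; in particular, the known argument does not proceed by locally refining a Part-I decomposition inside its essential parts, which is precisely the point at which your proposal stops being a proof. What your construction does deliver, correctly, is a weaker statement in the spirit of Theorem~\ref{thm_leafnice}\,(ii): the separations $(U\cup N(U),V\sm U)$ with $U\sub V_t$ have order $<k$ by \eqref{SXSk}, are pairwise nested, are nested with $\cN_0$ (here you should use the tightness of the separations in $\cN_0$ to argue that a component $U\sub V_t$ avoids the adhesion sets at~$t$, a point you assert but do not verify), and form an automorphism-invariant family. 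But this only shrinks $V_t$ to $V_t$ minus the components of $G-X$ it wholly contains; closing the gap between that set and $X$ itself is the actual substance of Theorem~\ref{7}.
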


The proof of Theorem~\ref{7} is quite involved and builds on~\cite[Theorem~5.2]{confing}. However, the theorem has a corollary that can be stated with a minimum of technical overheads and emphasises the way in which it is best possible:

\begin{cor}
For every $k$, every finite graph has a good \td\ that includes among its parts all $k$-blocks that are a part in some \td\ of adhesion~$<k$.
\end{cor}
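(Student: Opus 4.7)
The plan is to derive this directly from Theorem~\ref{7}: that theorem already produces, for each $k$, a good \td\ containing every $k$-block~$X$ that satisfies~\eqref{SXSk}, so the only thing left is to show that any $k$-block~$X$ appearing as a part in some \td\ of adhesion~$<k$ automatically satisfies~\eqref{SXSk}. In other words, the whole corollary reduces to the following claim: if $X=V_t$ for some node~$t$ of a \td\ $\TV$ of adhesion~$<k$, then every tight separation $\AB$ with $X\sub B$ and $A\sm B$ a component of $G-X$ has order~$<k$.

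To verify the claim I would pick such an $\AB\in\cS(X)$, set $C:=A\sm B$, and first argue that $C$ sits inside a single branch of~$\cT$ at~$t$. This is the step I expect to require the most care. The idea is that each $v\in C$ is housed in parts whose index set is a subtree of $\cT-t$ (since $V_t=X$ misses~$C$); consecutive vertices along any path in~$C$ share a part, so their subtrees meet; connectedness of~$C$ then forces all these subtrees into a common component of $\cT-t$. Let $tt_1$ be the edge leading into that component, with induced separation $\sepn{A_1}{B_1}$, so that $X\sub B_1$, $C\sub A_1\sm B_1$, and the adhesion set $S_1:=A_1\cap B_1\sub X$ has order~$<k$.

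Once $C$ is confined to the $A_1$-side, the rest is a short calculation. No edge of~$G$ crosses between $A_1\sm B_1$ and $B_1\sm A_1$, so every neighbour of~$C$ in~$G$ lies in $A_1\cap X\sub S_1$, giving $|N(C)|\le|S_1|<k$. Tightness of~$\AB$ then pins down the separator: $N(C)\sub A\cap B$ is immediate, while tightness forbids any vertex in $A\cap B$ that lacks a neighbour in $A\sm B=C$, so $A\cap B\sub N(C)$ as well. Hence $\AB$ has order $|N(C)|<k$, establishing~\eqref{SXSk}. Applying Theorem~\ref{7} then yields the corollary.
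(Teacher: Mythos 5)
Your proposal is correct and follows essentially the same route as the paper: apply Theorem~\ref{7} and observe that any $k$-block occurring as a part of a \td\ of adhesion~$<k$ satisfies~\eqref{SXSk}. The only difference is that the paper merely cites this last fact as ``noted earlier'' without proof, whereas you supply a correct verification of it (confining the component $A\sm B$ to one branch of the decomposition tree at the node whose part is the block, and using tightness to identify $A\cap B$ with its neighbourhood).
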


\begin{proof}
Consider the \td~\TV\ provided by Theorem~\ref{7}. Let $X$ be any $k$-block that occurs as a part in some good \td. As noted earlier, this implies that $X$ satisfies~\eqref{SXSk}. By the choice of~\TV, this means that $X$ is also a part of~\TV.
\end{proof}
   \COMMENT{}

Are there any natural conditions ensuring that \emph{every\/} essential part is a $k$-block? (In particular, such conditions will have to rule out Example~\ref{ex_junky}.) We do not know the answer to this question. But we can offer the following:

\begin{thm}\label{cond_well-sep}
Assume that $G$ is $(k-1)$-connected, and that 
every pair $x,y$ of adjacent vertices has one of the following properties:\vskip-3pt\vskip0pt
\begin{enumerate}[\rm (i)]\itemsep=0pt
\item $x$ and~$y$ have at least $k-3$ common neighbours;
\item $x$ and~$y$ are joined by at least $\lfloor\frac{3}{2}(k-2)\rfloor$ independent paths other than~$xy$;
\item $x$ and~$y$ lie in a common $k$-block.
\end{enumerate}
Then $G$ has a canonical \td\ of adhesion $< k$
such that every part containing a $k$-block is a $k$-block. In particular distinct $k$-blocks are contained in different parts.
\end{thm}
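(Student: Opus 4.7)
The plan is to apply Theorem~\ref{thm_leafnice} with $\cS$ taken to be the set of all tight proper $(<k)$-separations of~$G$. Since $G$ is $(k-1)$-connected, \cite[Lemma~4.1]{CDHH13CanonicalAlg} ensures that the task $\SP$ with $\cP$ the set of all $k$-block profiles is feasible. Theorem~\ref{thm_leafnice} then produces a canonical \td\ of adhesion~$<k$ distinguishing all the $k$-blocks of~$G$, in which every part containing a \emph{well separated} $k$-block coincides with that block. It therefore suffices to show that under hypotheses~(i)--(iii), every $k$-block of~$G$ is well separated in~$\cS$.

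Suppose for contradiction that some $k$-block $X$ fails to be well separated. Then there exist two maximal elements $\AB$ and $\CD$ of $P_k(X)\cap\cS$ that cross; both are tight, of order at most $k-1$, and have $X\sub B\cap D$. By submodularity of the order function on separations, at least one of the diagonal corner separations $(A\cup C, B\cap D)$ and $(A\cap C, B\cup D)$ has order~$<k$, and similarly for the other diagonal pair $(A\cap D, B\cup C)$ and $(B\cap C, A\cup D)$. Since $X\sub B\cap D$, the corner $(A\cup C, B\cap D)$, if of order~$<k$, strictly extends both $\AB$ and~$\CD$ within $P_k(X)$; the maximality of $\AB$ and~$\CD$ in $P_k(X)\cap\cS$ therefore forces it to be \emph{non-tight}. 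An analogous dichotomy holds for each of the other three corner separations.

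These non-tightness conditions, combined with the tightness of~$\AB$ and~$\CD$, pin down an edge $xy$ of~$G$ that must violate all three hypotheses. A non-tight vertex $x$ in a corner separator has no neighbour on one side of that corner, so tightness of $\AB$ or~$\CD$ forces $x$ to have a neighbour $y$ inside one of the small corners $A\cap C$, $A\cap D$, or~$B\cap C$. Counting common neighbours of $x$ and $y$ using $|A\cap B|+|C\cap D|\le 2k-2$ yields at most $k-4$ common neighbours, ruling out~(i); a flow analysis through the three small corners, whose separator contributions are again governed by submodularity, yields fewer than $\lfloor\frac{3}{2}(k-2)\rfloor$ independent $x$--$y$ paths other than~$xy$, ruling out~(ii); and any $k$-block containing both $x$ and~$y$ would, by $(<k)$-inseparability, have to lie on one side of each of $\AB$ and~$\CD$, which is incompatible with the positions of $x$ and~$y$ across the crossing, ruling out~(iii).

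The principal obstacle is this last step: translating the abstract non-tightness data for the four corner separations into the precise violations of~(i) and~(ii). The bound $\lfloor\frac{3}{2}(k-2)\rfloor$ in hypothesis~(ii) appears calibrated to the maximum $xy$-flow routable through three small corners whose separators, by two applications of submodularity to the pairs $\{\AB,\CD\}$ and their corner descendants, cannot collectively exceed what this bound allows. Making the calibration exact, and verifying that all the relevant small corners are simultaneously non-empty and available to carry flow, is the main technical content of the proof; once that is done, the `in particular' clause follows from Theorem~\ref{thm_leafnice}\,(i).
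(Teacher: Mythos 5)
Your overall frame---reduce via Theorem~\ref{thm_leafnice} to showing that the relevant $k$-blocks are well separated, then analyse a crossing pair with submodularity and exhibit an edge $xy$ violating (i)--(iii)---is also the paper's frame, but your proposal stops exactly where the real work has to be done, and the handle you propose for that work is too weak. The paper takes \cS\ to be \emph{all} proper $(<k)$-separations and argues via \cite[Lemma~3.4]{CDHH13CanonicalAlg}: for crossing $\AB,\CD\in P\cap\cS$, if the corner $(A\cup C,B\cap D)$ has order $\le k-1$ it lies in $P\cap\cS$ by the profile axiom (P2) and we are done; if not, submodularity makes the opposite corner $(B\cup D,A\cap C)$ have order $<k-1$, and then $(k-1)$-connectedness forces that separation to be \emph{improper}, yielding the two facts that power everything afterwards: $A\cap C\sub B\cup D$ and $|A\cap C|\le k-2$. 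The edge is then located precisely, $x\in(C\cap D)\sm B$ with a neighbour $y\in(A\cap B)\sm D$ (so $x,y\in A\cap C$), and each hypothesis is refuted quantitatively: common neighbours of $x,y$ lie in $A\cap C$, so (i) would force $|A\cap C|\ge k-1$; the independent paths are counted against $X=(A\cap C)\sm\{x,y\}$, $Y=(A\cap B)\sm C$, $Z=(C\cap D)\sm A$, where the inequality $|X|+|Y|+|Z|\le 2(k-2)$ \emph{needs} $A\cap C\sub B\cup D$, giving $|\cW|\le\frac{3}{2}(k-2)-1$; and a $k$-block containing $x,y$ would have to lie in $A\cap C$, impossible because $|A\cap C|\le k-2$.

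In your version none of this is available. By taking \cS\ to be only the tight $(<k)$-separations, maximality of $\AB$ and $\CD$ tells you merely that a low-order corner is \emph{non-tight}; you never derive the improperness or the bound $|A\cap C|\le k-2$, yet that smallness is exactly what all three case analyses use. Your asserted conclusions (``at most $k-4$ common neighbours'', the flow bound through the small corners, and the case~(iii) contradiction) are stated but not derived, and they do not follow from non-tightness data alone---for instance in (iii) a $k$-block containing $x$ and $y$ ends up inside $A\cap C$, which is only absurd once one knows $|A\cap C|\le k-2$. You concede the point yourself by calling the translation of the non-tightness data into violations of (i) and (ii) ``the main technical content'' of the proof; as written, that content is missing, so the argument is incomplete. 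The repair is the paper's route: take \cS\ to be all proper $(<k)$-separations so that (P2) puts the good corner into $P\cap\cS$ whenever its order is $<k$, and use $(k-1)$-connectedness to force the opposite corner to be improper, after which the edge $xy$ and the three contradictions fall out as above.
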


\begin{proof}
By Theorem~\ref{thm_leafnice} it suffices to show that every element $P$ of the set \cP\ of $k$-block profiles is well separated in the set \cS\ of all the proper $(<k)$-separations.

We do this by applying \cite[Lemma~3.4]{CDHH13CanonicalAlg}. Given $P\in\cP$, let $(A,B),(C,D)$ be crossing separations \
in $P \cap \cS$. If the separation $(A\cup C,B\cap D)$ has order~$\le k-1$, it is in $P\cap \cS$ by~(P2), and we are done. If not, then the separation $(B\cup D,A\cap C)$ has order~$<(k-1)$. Since $G$ is $(k-1)$-connected, $(B\cup D,A\cap C)$ must be improper. This means that $A\cap C \sub B\cup D$, because $B\not\sub A$ as $B$ contains a $k$-block.%
   \COMMENT{}
   But since \AB\ and \CD\ cross, we cannot have $A\cap C \sub B\cap D$. By symmetry we may assume that there is a vertex $x\in (C\cap D) \sm B$. As $G$ is $(k-1)$-connected, \CD\ is tight, so $x$ has a neighbour $y\in(A\cap B)\sm D$. Let $e := xy$.

Suppose first that $e$ satisfies (i). Since all common neighbours of $x$ and $y$ lie in $A\cap C$, this implies $k-1\leq|A\cap C|\le k-2$,%
   \COMMENT{}
   a contradiction.

Now suppose that $e$ satisfies~(ii), and let $\Wcal$ be a set of at least $\lfloor\frac{3}{2}(k-2)\rfloor$ independent 
$x$--$y$ paths other than the edge~$xy$. Let
 $$X:= (A\cap C)\sm\{x,y\}\qquad Y:= (A\cap B)\sm C\qquad Z:= (C\cap D)\sm A\,.$$
Since $A\cap C \sub B\cup D$, we have
 \begin{equation}\label{XYZ}
|X| + |Y| + |Z| \le |A\cap B|-1 + |C\cap D|-1 = 2(k-2).
\end{equation}
Every path in \cW\ that avoids $X$ meets both $Y$ and~$Z$.%
  \COMMENT{}
   As $|X|\le (k-2)-2$, this yields
   \COMMENT{}
 $$|\cW|\le |X| + {\textstyle{1\over2}}\big( |Y| + |Z|\big) \le |X| + (k-2)-\textstyle{1\over2}|X| \le \frac{3}{2}(k-2) - 1,$$
a contradiction.

Finally assume that $e$ satisfies (iii). Let $X$ be a $k$-block containing $x$ and~$y$.
As $x\notin B$ and $y\notin D$ we have $X\sub A\cap C$, contradicting $|A\cap C|\le k-2$.
\end{proof}

For $k=2$, Theorem~\ref{cond_well-sep}\,(i) implies Tutte's theorem that every 2-connected graph has a \td\ whose essential parts are precisely its 3-blocks. The decomposition obtained by any strategy starting with~$\smax$ is the decomposition provided by Tutte~\cite{TutteGrTh}, in which the inessential parts have cycle torsos.%
   \COMMENT{}

\bibliographystyle{plain}
\bibliography{collective}

\end{document}